\newdimen\myMargin
\newcommand{\F}{\mathbb F}
\newcommand{\stb}[2]{#1_1,\dots,#1_{#2}}
\newcommand{\N}{\mathbb N}
\newcommand{\ve}[1]{\mathbf{#1}}
\newcommand{\Fx}{\F\left[x_1,\dots,x_n\right]}
\newcommand{\Fxv}{\F\left[\ve x\right]}
\newcommand{\monom}[2]{\ve{#1}^{\ve{#2}}}
\newtheorem{theorem}{Theorem}
\newtheorem{lemma}[theorem]{Lemma}
\newtheorem{corollary}[theorem]{Corollary}
\newtheorem{definition}[theorem]{Definition}
\theoremstyle{remark}
\theoremstyle{remark}\newtheorem{remark}[theorem]{Remark}
\title{Alon's Nullstellensatz for multisets}
\author{\normalsize
  \begin{minipage}{0.4\linewidth}
    \large
    G\'eza K\'os \\
    \footnotesize
    Computer and Automation Research Instute,
    Hungarian Academy of Sciences; \\
    Department of Analysis,
    E\"otv\"os Lor\'and University, Budapest \\
    \texttt{kosgeza@sztaki.hu} \\
    \normalsize
  \end{minipage}
  \qquad
  \begin{minipage}{0.4\linewidth}
    \large
    Lajos R\'onyai \\
    \footnotesize
    Computer and Automation Research Instute,
    Hungarian Academy of Sciences; \\
    Department of Algebra,
    Budapest University of Technology and Economics \\
    \texttt{ronyai@sztaki.hu}
    \normalsize
  \end{minipage}
  \vspace{0.5cm}
}
\begin{document}

\thispagestyle{empty}

\maketitle

\footnotetext{
\noindent
{\em Mathematics Subject Classification (MSC2010):} 05-XX, 05E40, 12D10. \\
{\em Key words and phrases:} Combinatorial Nullstellensatz, polynomial method,
Gr\"obner basis, divided differences, sumset, multiset, multiple point. \\
Research supported in part by OTKA grants NK 72845, K77476, and K77778. 
}

%==========================================================================

\begin{abstract}
Alon's combinatorial Nullstellensatz (Theorem 1.1 from \cite{Alon1}) is one
of the most powerful algebraic tools in combinatorics, with a diverse array
of applications.
Let $\F$ be a field, $S_1,S_2, \ldots , S_n$ be finite nonempty subsets
of $\F$. 
% Let $\Fxv=\Fx$ stand for the ring of polynomials over $\F$ in 
% variables $x_1, \ldots, x_n$.  
Alon's theorem is a specialized, precise
version of the Hilbertsche Nullstellensatz for the ideal of all polynomial
functions
vanishing on the set $S=S_1\times S_2\times \cdots \times S_n\subseteq
\F^n$. From
this  Alon deduces a simple and amazingly widely applicable nonvanishing
criterion (Theorem 1.2 in \cite{Alon1}). It provides a sufficient condition
for a
polynomial $f(x_1,\ldots ,x_n)$ which guarantees that $f$ is not identically zero 
on the set $S$. In this paper  we extend these two results from sets of points to 
multisets. We give two different proofs of the generalized nonvanishing
theorem. We extend some of the known applications of the original 
nonvanishing theorem to a setting allowing multiplicities, including the 
theorem of Alon and F\"uredi on the hyperplane coverings of discrete cubes.  

\end{abstract}

%%%%%%%%%%%%%%%%%%%%%%%%%%%%%%%%%%%%%%%%%%%%%%%%%%%%%%%%

\section{Introduction}

Alon's combinatorial Nullstellensatz (Theorem 1.1 from \cite{Alon1}) is one
of the most powerful algebraic tools in combinatorics. It has dozens of
beautiful and strong applications, see \cite{CLM}, \cite{Felszeghy},
\cite{GT}, \cite{KarolyiR}, \cite{PS}, \cite{Sun} for some recent examples. 

Let $\F$ be a field, $S_1,S_2, \ldots , S_n$ be finite nonempty subsets 
of $\F$. Let $\Fxv=\Fx$ stand for the ring of polynomials over $\F$ in
variables $x_1, \ldots, x_n$.  Alon's theorem is a specialized, precise 
version of the Hilbertsche Nullstellensatz for the ideal of all polynomial 
functions
vanishing on the set $S=S_1\times S_2\times \cdots \times S_n\subseteq
\F^n$, and for the basis  $f_1,f_2,\ldots ,f_n$, where 
$$ f_i=f_i(x_i)=\prod_{s\in S_i}(x_i-s)\in \Fxv $$ 
for $i=1,\ldots ,n$. From 
this  Alon deduces a simple and amazingly widely applicable nonvanishing 
criterion (Theorem 1.2 in \cite{Alon1}). It provides a sufficient condition for a
polynomial $f\in \Fxv$ which guarantees that $f$ is not identically zero on 
$S$. Here we aim to extend these two results from sets of points to 
multisets.

\bigskip%-------------------------------------------------------------

To formulate our results, we need some more notation and definitions.
Let $\N$ denote the set of nonnegative integers, and let $n$ be a
fixed positive integer. Vectors of length $n$ are denoted by boldface
letters, for example $\ve s=(\stb sn)\in\F^n$ stands for points in the
space $\F^n$. For vectors $\ve a,\ve b\in \N^n$, the relation $\ve
a\ge\ve b$ etc. means that the relation holds at every component. We
use the same notations for constant
vectors. e.g. $\ve0=(0,0,\ldots,0)$ or $\ve1=(1,1,\ldots,1)$.

For $\ve w\in\N^n$, we write $\monom xw$ for the monomial
$x_1^{w_1}\dots x_n^{w_n}\in\Fxv$.  If $\ve s\in\F^n$, then $(\ve
x-\ve s)^{\ve w}$ stands for the polynomial
$(x_1-s_1)^{w_1}\dots(x_n-s_n)^{w_n}$.

\bigskip%-------------------------------------------------------------

It is well known that for an arbitrary $\ve s\in \F^n$ we can express
a polynomial $f(\ve x)\in \Fxv$ as
\begin{equation} \label{kifejt} f(\ve x) = \sum_{\ve u\in \N^n} f_{\ve u}(\ve
  s) (\ve x - \ve s)^{\ve u},
\end{equation} 
where the coefficients $f_{\ve u}(\ve s)\in \F$ are uniquely
determined by $f$, $\ve u$ and~$\ve s$.  In particular we have
$f_{\ve0}(\ve s)=f(\ve s)$ for all $\ve s \in\F^n$. If
$u_i<\mathrm{char}\,\F$ for all $i$, then we have
$$
f_{\ve u}(\ve s) = \frac1{u_1!\cdots u_n!} \cdot
\frac{\partial^{u_1\dots+u_n} }{\partial x_1^{u_1}
  \dots\partial x_n^{u_n}} f(\ve s).
$$
Notice also that if $u_1+\dots+u_n\ge\deg f$, then $f_{\ve u}=f_{\ve
  u}(\ve s)$ does not depend on $\ve s$.

\bigskip%-------------------------------------------------------------

For a point $ \ve s \in \F^n$ and an exponent vector $\ve w \in \N^n$
with positive integer components we write $I(\ve s , \ve w )$ for the
set of polynomials $f(\stb xn )$ for which in the expansion
\eqref{kifejt} we have $f_{\ve u}(\ve s)=0$ for all $\ve u<\ve w$.
%unless there exists a $j$ such that $u_j\geq w_j$. 
It is a simple matter to check that $I(\ve s , \ve w )$ is actually an
ideal in $\Fxv$. We have also that

\begin{equation}
\label{dim-eq}
\dim_{\F}\Fxv / I(\ve s , \ve w )=w_1w_2\cdots w_n,
\end{equation}
because the monomials $(\ve x-\ve s)^{\ve u}$ with $0\leq u_j<w_j$ form
a basis of the factor $\Fxv / I(\ve s , \ve w )$.

\medskip

As before, suppose that  $S_1, S_2, \ldots ,S_n$ are nonempty finite subsets 
of $\F$. Suppose further that we have 
a positive integer {\em multiplicity} $m_i(s)$ attached to the elements of 
$s\in S_i$. This way we can view the pair $(S_i,m_i)$ as a multiset which
contains the element  $s\in  S_i$ precisely $m_i(s)$ times. We shall
consider the sum $d_i=d(S_i):=\sum\limits_{s\in S_i}m_i(s)$ as the size 
of the multiset $(S_i,m_i)$.
We put 
$S=S_1\times S_2\times \cdots S_n$.
For an element $\ve s =(\stb sn) \in S$ we set the multiplicity vector 
$m(\ve s)$ as $(m_1(s_1),\ldots ,m_n(s_n) )$, and write 
$|m(\ve s)|=m_1(s_1)+\cdots +m_n(s_n)$. 

\medskip

Our principal object of interest is the ideal 
$$ I=I(S)=\bigcap_{\ve s\in S} I(\ve s, m(\ve s)). $$
For $i=1,\ldots, n$ we define the polynomials $g_i(x_i)\in \Fxv $ as 
\begin{equation} \label{g-eq}
g_i(x_i)=\prod_{s\in S_i}(x_i-s)^{m_i(s)}.
\end{equation}
We see that $g_i$ is a monic polynomial of degree  $d_i$. Moreover, 
for the ideal generated by the $g_i$ we have 
\begin{equation}\label{contain}
(g_1(x_1),g_2(x_2), \ldots ,g_n(x_n))\subseteq I.
\end{equation}

\bigskip%-------------------------------------------------------------

The following theorem is a generalization of Alon's Nullstellensatz (Theorem
1.1 from \cite{Alon1}). We recover Alon's result by setting $m_i(s)=1$
everywhere.

\begin{theorem} \label{nullstellensatz}
We have 
$$ (g_1(x_1),\ldots ,g_n(x_n)) = I.$$ 
Moreover, for every polynomial $f(\ve x)\in \Fxv$ there are polynomials 
$h_1,\ldots ,h_n, r\in \Fxv $ such that $\deg h_i\leq \deg f - d_i$, the 
degree of $r$ is less than $d_i$ in every $x_i$, for which 
$$ f(\ve x)=r(\ve x)+ \sum_{i=1}^n h_i(\ve x)g_i(x_i ). $$ 
In the above expansion $r$ is uniquely determined by $f$. 
\end{theorem}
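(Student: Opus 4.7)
The plan is to prove the decomposition/uniqueness statement first and then deduce the ideal equality $(g_1,\ldots,g_n)=I$ by a dimension count.

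\textbf{Decomposition.} For any $f\in\Fxv$ I would perform multivariate division by $g_1,\ldots,g_n$. Since $g_i$ is monic of degree $d_i$ in $x_i$ alone, any monomial $\monom xw$ with $w_i\ge d_i$ can be rewritten via $x_i^{d_i}=g_i(x_i)-(g_i(x_i)-x_i^{d_i})$, where the second term has $x_i$-degree strictly less than $d_i$. Applied iteratively (say, in decreasing lexicographic order of exponents), this terminates and yields $f=r+\sum_i h_i g_i$ with $\deg_{x_i}r<d_i$ for every $i$. Tracking total degrees: each reduction step that feeds into $h_i$ contributes a monomial of total degree at most $\deg f - d_i$, so $\deg h_i \le \deg f - d_i$.

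\textbf{Uniqueness.} If $f$ had two decompositions, their difference would give some $r_1-r_2\in(g_1,\ldots,g_n)$ with $\deg_{x_i}(r_1-r_2)<d_i$ for all $i$. But since the $g_i$ involve pairwise disjoint variables, the monomials $\monom xw$ with $w_i<d_i$ for every $i$ are $\F$-linearly independent modulo $(g_1,\ldots,g_n)$ (the reduction above shows they span the quotient, and any nontrivial $\F$-linear dependence among them in the ideal would, after dividing by a suitable $g_i$, force an impossible degree drop). Hence $r_1=r_2$.

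\textbf{Ideal equality.} The inclusion $(g_1,\ldots,g_n)\subseteq I$ is \eqref{contain}. For the converse I would compare $\F$-dimensions of the quotient rings. The decomposition step shows
$$\dim_\F \Fxv/(g_1,\ldots,g_n)=d_1d_2\cdots d_n.$$
For $\dim_\F\Fxv/I$, I would apply the Chinese Remainder Theorem to the ideals $I(\ve s,m(\ve s))$ for $\ve s\in S$. Pairwise comaximality is the one point that needs checking: if $\ve s\ne\ve t$, pick a coordinate $i$ with $s_i\ne t_i$; then $(x_i-s_i)^{m_i(s_i)}\in I(\ve s,m(\ve s))$ and $(x_i-t_i)^{m_i(t_i)}\in I(\ve t,m(\ve t))$ are coprime univariate polynomials, so the two ideals together generate the unit ideal. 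Combined with \eqref{dim-eq}, CRT gives
$$\dim_\F \Fxv/I=\sum_{\ve s\in S}\prod_{i=1}^n m_i(s_i)=\prod_{i=1}^n\sum_{s\in S_i}m_i(s)=\prod_{i=1}^n d_i.$$
Thus the surjection $\Fxv/(g_1,\ldots,g_n)\twoheadrightarrow\Fxv/I$ induced by the inclusion is between $\F$-vector spaces of the same finite dimension, so it is an isomorphism, and $(g_1,\ldots,g_n)=I$.

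The main obstacle I expect is the CRT dimension computation, specifically confirming the pairwise comaximality of the local ideals $I(\ve s,m(\ve s))$; once this is in hand, the rest is a standard Gröbner-basis style argument exploiting that the generators $g_i(x_i)$ live in disjoint variables.
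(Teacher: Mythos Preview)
Your approach is essentially the paper's: the same reduction $x_i^{d_i}\leadsto x_i^{d_i}-g_i(x_i)$ for the decomposition, the same CRT computation of $\dim_\F\Fxv/I$ (with comaximality argued via a coordinate where $\ve s$ and $\ve t$ differ, where the paper instead invokes the distinct maximal radicals), and the same dimension comparison through \eqref{contain} for the ideal equality. The one substantive difference is the order of the steps. The paper computes $\dim_\F\Fxv/I=d_1\cdots d_n$ \emph{first}, so the reduction only has to give the inequality $\dim_\F\Fxv/(g_1,\ldots,g_n)\le d_1\cdots d_n$; equality of ideals then follows, and uniqueness of $r$ falls out at the very end, since $r-r'\in I$ together with the degree bounds forces $r-r'=0$ once the reduced monomials are known to form a basis of $\Fxv/I$. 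Your ordering instead requires establishing linear independence of the reduced monomials modulo $(g_1,\ldots,g_n)$ directly, and your parenthetical justification (``dividing by a suitable $g_i$ would force an impossible degree drop'') is the only soft spot: the claim is true but this is not yet an argument, since in a relation $p=\sum_i h_ig_i$ the summands may individually have large $x_i$-degree and cancel. The cleanest fix is the free-module/tensor observation the paper records in the Remark after its proof, or simply to reorder as the paper does and obtain linear independence for free from the CRT side.
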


\begin{remark}
  We have $r(\ve x) \equiv 0$ in the expansion of the theorem if and
  only if $f\in I$.
\end{remark}

We can strengthen a little the part of Theorem~\ref{nullstellensatz}
which states that
$\{g_1, \ldots , g_n \}$ is a nice generating set for $I$. For the basics
of the theory of Gr\"obner bases we refer to \cite{tapas} and \cite{AL}. 

\begin{corollary} 
\label{groebner}
The set of polynomials 
$\{g_1, \ldots , g_n \}$ is a universal Gr\"obner basis for $I$.
\end{corollary}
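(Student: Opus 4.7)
The plan is to combine two ingredients: the equality $(g_1,\ldots,g_n)=I$ that is already guaranteed by Theorem~\ref{nullstellensatz}, and the standard observation that polynomials with pairwise coprime leading monomials automatically form a Gr\"obner basis of the ideal they generate, no matter which monomial order is used.

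First I would pin down the leading monomial of each $g_i$ under an arbitrary monomial order $\prec$ on $\Fxv$. Since $g_i$ depends only on the single variable $x_i$, every term occurring in $g_i$ is of the form $c\,x_i^k$ with $0\le k\le d_i$. Any monomial order restricted to powers of a fixed variable is the usual order (because $\ve1 \prec \monom xu$ for $\monom xu\neq 1$, so multiplying by $x_i^j$ gives $x_i^j\prec x_i^{j+k}$ whenever $k\ge 1$). Therefore $\lm_\prec(g_i)=x_i^{d_i}$ for every admissible $\prec$.

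Next, the leading monomials $x_1^{d_1},\ldots,x_n^{d_n}$ involve pairwise disjoint sets of variables, hence are pairwise coprime. It is a standard fact (via Buchberger's criterion: when $\lm(f)$ and $\lm(g)$ are coprime, the $S$-polynomial $S(f,g)$ reduces to $0$ modulo $\{f,g\}$) that a finite set of polynomials whose leading monomials are pairwise coprime is a Gr\"obner basis of the ideal it generates, regardless of $\prec$. Applying this to $\{g_1,\ldots,g_n\}$ we conclude that this set is a Gr\"obner basis of $(g_1,\ldots,g_n)$ with respect to every monomial order.

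Finally, invoking Theorem~\ref{nullstellensatz} we have $(g_1,\ldots,g_n)=I$, so $\{g_1,\ldots,g_n\}$ is a Gr\"obner basis of $I$ for every monomial order, i.e.\ a universal Gr\"obner basis. There is no real obstacle here; the only point that requires a brief justification is the invariance of $\lm_\prec(g_i)$ across monomial orders, and that follows from elementary properties of the $g_i$ being univariate.
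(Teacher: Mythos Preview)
Your argument is correct. The key observations---that $\lm_\prec(g_i)=x_i^{d_i}$ independently of $\prec$, that pairwise coprime leading monomials force all $S$-polynomials to reduce to zero (Buchberger's criterion), and that Theorem~\ref{nullstellensatz} supplies $(g_1,\ldots,g_n)=I$---combine exactly as you say.

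The paper's own proof takes a slightly different route. Rather than invoking Buchberger's $S$-polynomial test, it argues directly from the reduction procedure used in the proof of Theorem~\ref{nullstellensatz}: each substitution of $x_i^{d_i}-g_i(x_i)$ for $x_i^{d_i}$ replaces a monomial by a linear combination of $\prec$-smaller monomials, so if $f\in I$ (hence $r=0$) the reduction must at some step apply to the $\prec$-leading monomial of $f$, forcing that monomial to be divisible by some $x_i^{d_i}$. This is the definition of a Gr\"obner basis verified by hand. In fact the paper itself notes in the remark following the corollary that your Buchberger-criterion approach is an equally valid alternative. Your version has the advantage of being a clean black-box application of a standard fact; the paper's version is more self-contained in that it recycles the explicit reduction already carried out and avoids citing the $S$-polynomial machinery.
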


\smallskip

\begin{remark} This will follow easily from the proof of
  Theorem~\ref{nullstellensatz}. The Gr\"obner basis property of
  $\{g_1, \ldots , g_n \}$ for the ideal it generates can also be
  proved by applying directly and very simply the $S$-polynomial test
  of Buchberger, (cf. \cite{B}, and Theorem 3.10 of Chapter 1 from
  \cite{tapas}) to the pair of polynomials $g_i(x_i), g_j(x_j)$.
\end{remark}

\begin{remark} 
\label{subring}
As in the case of Alon's theorem, we have that 
if the coefficients of 
$f$ and $g_i$ are from some subring $R$ of $\F$, then the polynomials 
$h_i$ and $r$ will be from $R[\stb xn]$ as well. 
\end{remark}

We can now formulate a version of Alon's powerful nonvanishing theorem 
(Theorem
1.2 in \cite{Alon1}) for multiple points.  Again, we obtain  
Alon's result by setting $m_i(s)=1$ identically.

\begin{theorem} \label{nonvanish}
Let $\F$ be a field, $f=f(x_1,\ldots ,x_n)\in \F [x_1,\ldots ,x_n]$ be a
polynomial of degree $\sum\limits_{i=1}^n t_i$, where each $t_i$ is a nonnegative
integer. Assume, that the coefficient in $f$ of the monomial 
$x_1^{t_1}x_2^{t_2}\cdots x_n^{t_n}$ is nonzero.  Suppose further that
$(S_1, m_1), (S_2,m_2), \ldots ,(S_n, m_n)$ are multisets of $\F$ such that 
for the size $d_i$ of $(S_i,m_i)$ we have $d_i>t_i$ ($i=1, \ldots , n$).
Then $f$ is not in the ideal $I$ attached to the multisets $(S_i,m_i)$. 

In
other words, there exists a point $\ve s =(s_1, \ldots, s_n)\in S_1\times \cdots
\times S_n$ and an exponent vector $\ve u =(u_1, \ldots, u_n)$ with 
$u_i<m_i(s_i)$ for each $i$, such that $f_{\ve u}(\ve s) \not = 0$ 
in the expansion of $f$ as
$$f(\stb xn )=\sum f_{\ve u}(\ve s)  (\ve x-\ve s)^{\ve u}, ~~
f_{\ve u}(\ve s) \in \F.$$
\end{theorem}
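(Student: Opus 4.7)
The plan is to derive Theorem~\ref{nonvanish} directly from Theorem~\ref{nullstellensatz} by tracking the coefficient of the monomial $x_1^{t_1}\cdots x_n^{t_n}$ through the division expansion. By Theorem~\ref{nullstellensatz} I write
$$ f(\ve x) = r(\ve x) + \sum_{i=1}^n h_i(\ve x)\, g_i(x_i),$$
with $\deg h_i \le \deg f - d_i$ and $\deg_{x_j} r < d_j$ for every $j$. By the Remark following the theorem, $f\in I$ iff $r\equiv 0$, so it suffices to show $r\not\equiv 0$. I will prove this by showing that the coefficient of $\monom x t = x_1^{t_1}\cdots x_n^{t_n}$ in $f$ and in $r$ is the same, namely the nonzero coefficient given in the hypothesis.

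The key step is therefore to verify that each summand $h_i(\ve x) g_i(x_i)$ contributes $0$ to the coefficient of $\monom x t$. Since $g_i(x_i)$ involves only the variable $x_i$, any monomial in $h_i\cdot g_i$ of the form $\monom x t$ must arise by multiplying a term $c\, x_i^{k}$ from $g_i$ (with $0\le k\le d_i$) by a monomial $x_1^{t_1}\cdots x_i^{t_i-k}\cdots x_n^{t_n}$ of $h_i$. This requires $k\le t_i$, and the total degree of the corresponding monomial of $h_i$ is
$$ \sum_{j=1}^n t_j - k \;\ge\; \sum_{j=1}^n t_j - t_i \;>\; \sum_{j=1}^n t_j - d_i \;=\; \deg f - d_i \;\ge\; \deg h_i,$$
where the strict inequality uses the hypothesis $d_i > t_i$. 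Hence $h_i$ cannot contain such a monomial, and $h_i g_i$ contributes nothing to the coefficient of $\monom x t$. Summing over $i$, the coefficient of $\monom x t$ in $r$ equals that in $f$, which is nonzero; so $r\not\equiv 0$ and $f\notin I$.

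For the reformulation at the end of the statement, recall that $I=\bigcap_{\ve s\in S} I(\ve s,m(\ve s))$, so $f\notin I$ implies the existence of some $\ve s\in S$ with $f\notin I(\ve s,m(\ve s))$. Unwinding the definition of $I(\ve s,m(\ve s))$, this means that in the Taylor-type expansion \eqref{kifejt} of $f$ around $\ve s$ there is at least one exponent vector $\ve u$ with $\ve u<m(\ve s)$ (i.e.\ $u_i<m_i(s_i)$ for every $i$) such that $f_{\ve u}(\ve s)\ne 0$, which is exactly the claim. The only nontrivial point is the degree bookkeeping in the middle paragraph, which is the natural analogue of Alon's original argument; the strict inequality $d_i>t_i$ is used in exactly the same spot, and once it is in place everything else is formal.
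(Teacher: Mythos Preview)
Your argument is correct and is essentially the paper's first proof of Theorem~\ref{nonvanish}: both deduce the result from Theorem~\ref{nullstellensatz} by comparing the coefficient of $x_1^{t_1}\cdots x_n^{t_n}$ on the two sides of the expansion and observing that no $h_ig_i$ can contribute to it because of the degree bound $\deg h_i\le \deg f-d_i$ together with $d_i>t_i$. The only cosmetic difference is that the paper phrases it as a contradiction (assuming $f\in I$ so that $r=0$), whereas you directly show $r\not\equiv0$; your explicit unwinding of the ``in other words'' part via $I=\bigcap_{\ve s\in S}I(\ve s,m(\ve s))$ is also fine.
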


\medskip

For two multisets $(H_1,m_1),(H_2,m_2)$ we write    
$(H_1,m_1)\subseteq (H_2,m_2)$ if $m_1(h)\leq m_2(h)$ holds whenever 
$h\in H_1$. We call the multisubset $(H_1,m_1)\subseteq (H_2,m_2)$ a tight 
multisubset, if  $m_1(h)=m_2(h)$ holds for every $h\in H_1$. 

In \cite{BS} Ball and Serra proved a punctured version of Alon's
Nullstellensatz. The result and the proof extends with slight
modifications to the a multiset case.

Let $(S_1, m_1),\ldots , (S_n, m_n)$ be multisets from the field $\F$.
Suppose that $(D_i,m_i)$ is a nonempty tight multisubset of $(S_i,m_i)$ for
$i=1,\ldots ,n$. Write $D=D_1\times D_2\times \cdots \times D_n$.
Let $g_i(x_i)$ be the polynomials from \eqref{g-eq} and put 
\begin{equation} \label{l-eq}
\ell_i(x_i)=\prod_{s\in D_i}(x_i-s)^{m_i(s)} \mbox{ for } i=1,\ldots ,n.
\end{equation}

\begin{theorem} 
\label{ballserra}
Let $f(\ve x)\in \Fxv$ be a polynomial such that $f\in I(\ve
s, m(\ve s))$ for all $\ve s\in S$ with the exception of at least 
one $\ve s^* \in D$, for which 
 $f\not\in I(\ve  s^*, m(\ve s^*))$. Then there are polynomials 
$h_1,\ldots ,h_n, r\in \Fxv $ such that $\deg h_i\leq \deg f - d_i$, the
degree of $r$ is less than $d_i$ in every $x_i$, for which   
$$ f(\ve x)=r(\ve x)+ \sum_{i=1}^n h_i(\ve x)g_i(\ve x), $$
and 
$$ r= h \prod_{i=1}^n\frac{g_i(x_i)}{\ell_i(x_i)} $$
for some nonzero $h\in \Fxv$. As a consequence, $\deg (f)\geq
\sum\limits\limits_{i=1}^n\big(d(S_i)-d(D_i)\big)$.
\end{theorem}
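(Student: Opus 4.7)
The plan is to apply Theorem~\ref{nullstellensatz} to $f$, producing the unique decomposition $f = r + \sum_{i=1}^n h_i g_i$ with the stated degree bounds and with $r$ of degree less than $d_i$ in each $x_i$, and then to argue that this remainder $r$ is divisible in $\Fxv$ by $\prod_{i=1}^n g_i(x_i)/\ell_i(x_i) = \prod_{i=1}^n \prod_{s\in S_i\setminus D_i}(x_i-s)^{m_i(s)}$. The factor $h$ will then be exactly the quotient $r/\prod_i(g_i/\ell_i)$.

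The hypotheses immediately pin down two properties of $r$. Since $\sum_i h_i g_i$ belongs to the ideal $I$, and hence to $I(\ve s, m(\ve s))$ for every $\ve s\in S$, and since $f\in I(\ve s, m(\ve s))$ for every $\ve s\in S\setminus D$, we get $r\in I(\ve s, m(\ve s))$ for every $\ve s\in S\setminus D$. Also, because $f\notin I(\ve s^*, m(\ve s^*))$ we have $f\notin I$, so $r=f-\sum_i h_i g_i$ is a nonzero polynomial.

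The heart of the argument is to show, for each index $i$ and each $s^0\in S_i\setminus D_i$, that $(x_i-s^0)^{m_i(s^0)}\mid r$; the resulting factors being pairwise coprime, their product $\prod_i g_i/\ell_i$ will then divide $r$. For concreteness take $i=1$ and $s^0\in S_1\setminus D_1$, and rewrite $r=\sum_{k\ge 0}q_k(x_2,\ldots,x_n)(x_1-s^0)^k$. For any $\ve s'=(s_2,\ldots,s_n)\in S_2\times\cdots\times S_n$, the point $(s^0,\ve s')$ lies in $S\setminus D$, and matching the expansion \eqref{kifejt} of $r$ about $(s^0,\ve s')$ with the univariate expansion in $(x_1-s^0)$ combined with the multivariate expansion of each $q_k$ about $\ve s'$ shows by uniqueness that $q_k\in I(\ve s',(m_2(s_2),\ldots,m_n(s_n)))$ for every $k<m_1(s^0)$. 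Letting $\ve s'$ range over all of $S_2\times\cdots\times S_n$, each such $q_k$ then lies in the intersection ideal attached to the multisets $(S_2,m_2),\ldots,(S_n,m_n)$, which by Theorem~\ref{nullstellensatz} applied in $n-1$ variables equals $(g_2,\ldots,g_n)$. But $q_k$ inherits from $r$ the bound $\deg_{x_j}q_k<d_j$ for $j\ge 2$, and the uniqueness of the remainder in Theorem~\ref{nullstellensatz} then forces $q_k\equiv 0$. Hence $(x_1-s^0)^{m_1(s^0)}\mid r$.

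Combining all such divisibilities gives $r=h\prod_{i=1}^n g_i(x_i)/\ell_i(x_i)$ with $h\ne 0$ since $r\ne 0$. The degree inequality then follows at once: from $f=r+\sum_i h_i g_i$ with $\deg h_i\le \deg f-d_i$ we have $\deg r\le \deg f$, while $\deg r\ge \deg\bigl(\prod_i g_i/\ell_i\bigr)=\sum_i\bigl(d(S_i)-d(D_i)\bigr)$ because $h$ is a nonzero polynomial factor. The main obstacle is the coefficient-matching in the key step: translating the condition ``$r$ vanishes to order $m(\ve s)$ at $(s^0,\ve s')$ for every $\ve s'$'' into ``each coefficient $q_k$ with $k<m_1(s^0)$ vanishes to order $(m_2(s_2),\ldots,m_n(s_n))$ at $\ve s'$'', via an interchange-of-expansions argument that reduces the problem to an $(n-1)$-variable instance of Theorem~\ref{nullstellensatz}.
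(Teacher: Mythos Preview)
Your proof is correct but follows a different route from the paper's. Both arguments start by applying Theorem~\ref{nullstellensatz} to obtain the decomposition $f=r+\sum_i h_ig_i$ and then aim to show that $\prod_i g_i/\ell_i$ divides $r$; the divergence is in how that divisibility is established.

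The paper multiplies $r$ by $\ell_i(x_i)$ and observes that $r\ell_i\in I$: for $\ve s\in S\setminus D$ this holds because $r$ itself lies in $I(\ve s,m(\ve s))$, while for $\ve s\in D$ the factor $\ell_i$ already contains $(x_i-s_i)^{m_i(s_i)}$. Since $\deg_{x_j}(r\ell_i)<d_j$ for $j\neq i$, reducing $r\ell_i$ modulo $(g_1,\ldots,g_n)$ can only use $g_i$, so $g_i\mid r\ell_i$ and hence $g_i/\ell_i\mid r$. Coprimality of the $g_i/\ell_i$ in the UFD $\Fxv$ then gives the conclusion. This is a short, global argument that stays in $n$ variables and uses only one instance of Theorem~\ref{nullstellensatz}.

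Your approach instead attacks each linear factor $(x_i-s^0)^{m_i(s^0)}$ with $s^0\in S_i\setminus D_i$ individually: you expand $r$ in powers of $(x_i-s^0)$, match coefficients against the expansion \eqref{kifejt} to see that each low-order coefficient $q_k$ lies in the $(n-1)$-variable ideal for $(S_j,m_j)_{j\neq i}$, and then kill $q_k$ by the degree constraints together with the uniqueness clause of Theorem~\ref{nullstellensatz} in $n-1$ variables. This is more hands-on and arguably more transparent about \emph{why} the factor appears, at the cost of invoking the $(n-1)$-dimensional statement and carrying out the interchange-of-expansions bookkeeping. The paper's trick of multiplying by $\ell_i$ avoids that recursion entirely.
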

  
\medskip

We mention here one more related result from \cite{BS} by Ball and Serra. They
obtained a generalization of Alon's Nullstellensatz to polynomials which
vanish at least $t$ times at every point of $S$ (cf. Theorem 3.1 in \cite{BS}).
This result is in turn related to the method of multiplicities (see the
paper \cite{DKSS} by Dvir, Kopparty, Saraf and Sudan). To give a specific
example,
from Theorem 3.1 of \cite{BS} it follows immediately that if $S$ is a subset 
of a
field $\F$, $f\in \F[x_1,\ldots ,x_n]$ is a polynomial of degree $d$ which
vanishes at least $t$ times at every point of $S^n$, then
$\text{deg} f\geq
t|S|$. This Schwartz-Zippel type inequality is an important special case of
Lemma 8 from \cite{DKSS}.

In the next section we prove Theorems \ref{nullstellensatz},
\ref{nonvanish}, and~\ref{ballserra}. The proof of
Theorem~\ref{nullstellensatz} uses some very simple facts from
commutative algebra.  For Theorem~\ref{nonvanish} we offer two
different proofs. The first one is a direct application of
Theorem~\ref{nullstellensatz}, while the second proof involves a
little more explicit relation among the expansion coefficients of $f$,
and is based on elementary calculations with divided differences
(Theorem~\ref{alt3:lem:linkomb}). We believe that
Theorem~\ref{alt3:lem:linkomb} is also of independent interest.

Section 3 is devoted to applications. We extend some known applications of
the nonvanishing theorem to a setting allowing multiplicities. In most cases 
the original proofs are generalized to higher multiplicities.

%%%%%%%%%%%%%%%%%%%%%%%%%%%%%%%%%%%%%%%%%%%%%%%%%%%%%%%%

\section{Proofs of Theorems~\ref{nullstellensatz}, \ref{nonvanish},
  and~\ref{ballserra} }

First we prove Theorem~\ref{nullstellensatz}. We use Alon's original 
argument together with dimension counting.

\medskip

\begin{proof}[Proof of Theorem~\ref{nullstellensatz}]
We recall first that  
$$ I=I(S)=\bigcap_{\ve s\in S} I(\ve s, m(\ve s)). $$
We show next that
\begin{equation} \label{Idimenzio}
\dim_{\F} \Fxv\big/ I=d_1d_2\cdots d_n.
\end{equation}

Indeed, the ideals $I(\ve s, m(\ve s))$ are pairwise relatively prime,
as the radicals of $I(\ve s, m(\ve s))$ are the maximal ideals
$(x_1-s_1, \ldots x_n-s_n)$, which are clearly relatively prime (see
Proposition 1.16 in \cite{AM}).  Now the Chinese Remainder Theorem
(Proposition 1.10 in \cite{AM}) gives that
\[
\Fxv\big/I\cong\bigoplus_{\ve s\in S} \Fxv\big/ I(\ve s, m(\ve s)).
\]
By taking dimensions and using \eqref{dim-eq} we obtain
$$ \dim _\F \Fxv\big/ I=\sum_{\ve s\in S} \dim_{\F} \Fxv\big/ I(\ve s,
m(\ve s))=
\sum_{\ve s\in S}m_1(s_1)\cdots m_n(s_n)=d_1d_2\cdots d_n. $$

To establish the Theorem, we focus first on the second statement. In
the monomials occurring in $f$ we repeatedly substitute $x_i^{d_i}-
g_i(x_i )$ for $x_i^{d_i}$ as long as possible. As $ \deg (x_i^{d_i}-
g_i(x_i))< d_i$, this reduction process is guaranteed to terminate in
finite steps with an $r$ of the desired form. Notice also, that the
above reduction step means subtracting a multiple of degree at most
$\deg f - \deg g_i$ of $g_i$ from $f$.  From the degree constraints
for $r$ we obtain the inequality
$$
\dim_{\F} \Fxv\big/ (g_1(x_1),\ldots ,g_n(x_n)) \leq d_1d_2\cdots d_n.
$$
Comparing this with \eqref{Idimenzio} and \eqref{contain}, we see that there
must be an equality in \eqref{contain}, proving the first claim.  

The uniqueness of $r$ also follows since two 
such polynomials $r$ and $r'$ satisfy $r-r'\in I$, and then the degree 
constraints imply that $r-r'=0$. 
\end{proof}

\begin{remark}
  Alternatively, one can prove $\dim_{\F} \Fxv\big/ (g_1(x_1),\ldots
  ,g_n(x_n)) = d_1d_2\cdots d_n$ by a repeated application of the
  following simple fact: if $A$ is a commutative ring and $f(x)\in
  A[x]$ is a monic polynomial of positive degree, then $A[x]/(f)$ is a
  free $A$-module of rank $\deg f$.
\end{remark}

\medskip

\begin{proof}[Proof of Corollary~\ref{groebner}]
Let $\prec$ be an arbitrary term order on the monomials    
of $\Fxv$. We observe that in the course of the reduction of a monomial
$\ve y$,
when we substitute $x_i^{d_i} -g_i(x_i )$ for $x_i^{d_i}$, we replace
$\ve y$ by a linear combination of monomials which are all 
$\prec$-smaller than $\ve y$. This implies in particular, that if $f\in I$
and $0\not= \ve y$ is the $\prec$-largest monomial of $f$, then there
exists an $i$ such that  $x_i^{d_i}\preceq \ve y$.
\end{proof}

\smallskip

From the proof Theorem~\ref{nullstellensatz}
it is apparent that if $f, g_i \in R[x_1, \ldots ,x_n]$ for some
subring $R$ of $\F$, then $r, h_i \in R[x_1, \ldots ,x_n]$ as well, proving
the claim of Remark~\ref{subring}. 
 
\smallskip

Theorem~\ref{nonvanish} now readily follows. The original argument of Alon 
is verbatim applicable, and is reproduced here for the reader's convenience.

\begin {proof}[Proof of Theorem~\ref{nonvanish}]
 Suppose for contradiction that $f\in I=I(S)$.
Then by Theorem~\ref{nullstellensatz} there are polynomials 
$h_1,\ldots ,h_n,\in \Fxv $ such that $\deg h_i\leq \deg f - d_i$,
for which   
$$ f(\ve x)=\sum_{i=1}^n h_i(\ve x)g_i(x_i ),  $$
where $g_i$ are the polynomials from \eqref{g-eq}. 
The coefficient of $x_1^{t_1}x_2^{t_2}\cdots x_n^{t_n}$ on the left is   
nonzero. On the other hand, the degree of $h_ig_i$ is at most the degree 
of $f$, and any monomial of this degree must be divisible by $x_i^{d_i}$ for
some $i$. It follows that the coefficient of 
$x_1^{t_1}x_2^{t_2}\cdots x_n^{t_n}$  is 0 on the right hand side. This is a
contradiction completing the proof.
\end{proof}

\smallskip

Next we adapt the argument of Ball and Serra from \cite{BS} 
to prove Theorem~\ref{ballserra}.

\begin{proof}[Proof of Theorem~\ref{ballserra}]
By Theorem~\ref{nullstellensatz} we can write $f$ as
$$ f(\ve x)=r(\ve x)+ \sum_{i=1}^n h_i(\ve x)g_i(x_i), $$
with $h_1,\ldots ,h_n, r\in \Fxv $,  $\deg h_i\leq \deg f - d_i$, and the
degree of $r$ is less than $d_i$ in every $x_i$. For each $i$ the polynomial
$r\ell_i$ is in $I$, hence it can be reduced to 0 by using the polynomials
$g_1(x_1),\ldots ,g_n(x_n)$. But if $j\not =i$ then $g_j(x_j)$ can not be
used in the reduction of $r\ell_i$ (or of any reduct of $r\ell_i$ by
$g_i(x_i)$) because the degree of  $r\ell_i$ in $x_j$ is less than $d_j$.  
We infer, that $g_i$ divides  $r\ell_i$: there is a polynomial $r_i\in \Fxv$
such that  $r(\ve x) \ell_i(x_i)=g_i(x_i)r_i(\ve x)$. Using that $\ell_i $
divides  $g_i$, we have that $\frac{g_i(x_i)}{\ell_i(x_i)}$ divides $r$.
Knowing that $\Fxv$ is a UFD and $\frac{g_i(x_i)}{\ell_i(x_i)}$ and
$\frac{g_j(x_j)}{\ell_j(x_j)}$ have
no associate prime factors in $\Fxv$ for $i\not= j$, we obtain that
$$ r= h \prod_{i=1}^n\frac{g_i(x_i)}{\ell_i(x_i)}, $$
with some polynomial $h$. Here  $h\not=0$ because $f\not\in I$ and hence
$r\not= 0$. The last statement follows from $\deg f\geq \deg r$.

\end{proof}

%-------------------------------------------------------------------

\subsection{An alternative proof for Theorem~\ref{nonvanish}}

Our objective here is to give a more direct proof of
Theorem~\ref{nonvanish}. It is based on a linear relation 
among the expansion coefficients of $f$, which we develop in 
Theorem~\ref{alt3:lem:linkomb}.
%We shall consider the expansion of $f$ at points $\ve s\in \F^n$
%$$
%f(\ve x) = \sum_{\ve u} f_{\ve u}(\ve s) (\ve x - \ve s)^{\ve u}
%$$ 
%with $f_{\ve u}(\ve s)\in \F$.

Throughout this subsection we keep our standard notation: $(S_1, m_1),
(S_2,m_2), \ldots ,(S_n, m_n)$ are nonempty finite multisets from
$\F$, and $d_i$ denotes the size of the multiset $(S_i,m_i)$. We put
$S=S_1\times\cdots\times S_n\subset\F^n$. We set also
$g_i(x_i)=\prod\limits_{s\in S_i}(x_i-s)^{m_i(s)}$ for $i=1,\ldots ,
n$, and $g(\ve x)=\prod\limits_{i=1}^ng_i(x_i)$.

\begin{theorem}
  \label{alt3:lem:linkomb} Let $\ve t=\ve d(S)-\ve1=
  \big(d_1-1,\ldots, d_n-1\big)$.

  (a) Then there exist constants
  $\alpha^{(\ve s)}_{\ve u}\in \F$ for $\ve s\in S$, $\ve u<m(\ve s)$,
independent of $f$,  such that
  \begin{equation}
    \label{alt3:eq:linkomb}
    f_{\ve t} = \sum_{\ve s \in S} \sum_{\ve u < m(\ve s)} \alpha^{(\ve
      s)}_{\ve u} f_{\ve u} (\ve s)
  \end{equation}
  holds for all polynomials $f\in\Fxv$ with $\deg f\le
  t_1+\dots+t_n$.

  (b) The coefficients $\alpha^{(\ve s)}_{\ve u}$ are uniquely
  determined by $(S,m)$, $\ve s$, $\ve t$ and $\ve u$.

  (c) If $\ve s\in S$ and $\ve u=m(\ve s)-\ve1$, then $\alpha^{(\ve
    s)}_{\ve u}\neq0$.
\end{theorem}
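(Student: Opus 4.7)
The plan is to work within the finite-dimensional space $V$ of polynomials $f\in\Fxv$ with $\deg f\leq|\ve t|=\sum_i(d_i-1)$, on which both $f\mapsto f_{\ve t}$ (the coefficient of $\monom xt$) and each $f\mapsto f_{\ve u}(\ve s)$ are linear functionals. Parts (a) and (b) will both follow once we understand the reduction map $\Phi\colon V\to\Fxv/I$, so I begin by analyzing it.

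For part (a), the key step is to show $f_{\ve t}=0$ whenever $f\in I\cap V$. By Theorem~\ref{nullstellensatz} such an $f$ decomposes as $\sum_{i=1}^n h_i(\ve x)g_i(x_i)$ with $\deg h_i\leq\deg f-d_i\leq|\ve t|-d_i$. In each summand $h_ig_i$, a monomial of total degree $|\ve t|$ must have the form (monomial of $h_i$ of degree $|\ve t|-k$) times $x_i^k$ for some $0\leq k\leq d_i$; the degree bound on $h_i$ forces $k=d_i$, so the monomial is divisible by $x_i^{d_i}$ and hence cannot equal $\monom xt=x_1^{d_1-1}\cdots x_n^{d_n-1}$. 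Therefore $f_{\ve t}=0$, and $f\mapsto f_{\ve t}$ factors through $\Phi$. Composing with the CRT isomorphism $\Fxv/I\cong\bigoplus_{\ve s\in S}\Fxv/I(\ve s,m(\ve s))$ from the proof of Theorem~\ref{nullstellensatz} then expresses $f\mapsto f_{\ve t}$ as a linear combination of the coordinate functionals $f\mapsto f_{\ve u}(\ve s)$, which is exactly \eqref{alt3:eq:linkomb}.

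For part (b), uniqueness of the $\alpha^{(\ve s)}_{\ve u}$ is equivalent to linear independence on $V$ of the functionals $f\mapsto f_{\ve u}(\ve s)$ for $\ve s\in S$, $\ve u<m(\ve s)$, hence to surjectivity of $\Phi$. But the standard monomial basis $\{\monom xu:0\leq u_j<d_j\}$ of $\Fxv/I$ consists of monomials of total degree at most $\sum_j(d_j-1)=|\ve t|$, so all its elements already lie in $V$. Hence $\Phi$ is surjective and uniqueness follows.

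For part (c), I would exhibit a polynomial that isolates the pair $(\ve s,m(\ve s)-\ve1)$. The natural choice is the separable product
\[
f(\ve x)=\prod_{j=1}^n\Bigl[(x_j-s_j)^{m_j(s_j)-1}\prod_{s''\in S_j\setminus\{s_j\}}(x_j-s'')^{m_j(s'')}\Bigr],
\]
a polynomial of degree exactly $|\ve t|$ with leading monomial $\monom xt$, so $f_{\ve t}=1$. Because $f$ factors as $\prod_j p_j(x_j)$, every coefficient $f_{\ve u}(\ve s')$ factors into a product of one-variable contributions; a short inspection of the $(x_j-s'_j)$-expansion of $p_j$ (distinguishing $s'_j=s_j$ from $s'_j\neq s_j$) shows that for $\ve s'\in S$ and $\ve u<m(\ve s')$ the product vanishes unless $(\ve s',\ve u)=(\ve s,m(\ve s)-\ve1)$, in which case it is nonzero. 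Substituting this $f$ into \eqref{alt3:eq:linkomb} collapses the sum to $f_{\ve t}=\alpha^{(\ve s)}_{m(\ve s)-\ve1}\cdot f_{m(\ve s)-\ve1}(\ve s)$ with both sides nonzero, forcing $\alpha^{(\ve s)}_{m(\ve s)-\ve1}\neq0$. The crux of the whole argument is the degree-counting in part (a), which is essentially Alon's original device from the proof of Theorem~\ref{nonvanish}; once that is in place, parts (b) and (c) are largely bookkeeping.
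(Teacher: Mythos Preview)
Your proof is correct and takes a genuinely different route from the paper. The paper develops a theory of multivariate divided differences: it defines $f[S]$ as the coefficient of $\monom xt$ in $(f \bmod (g_1,\dots,g_n))$, proves a recursion $f[S]=\dfrac{f[S']-f[S'']}{b-a}$ when two distinct elements $a,b$ lie in some $S_i$, and then unwinds this recursion to obtain the linear combination \eqref{alt3:eq:linkomb} constructively, in the process deriving the explicit value $\alpha^{(\ve s)}_{m(\ve s)-\ve1}=\prod_i\prod_{s\ne s_i}(s-s_i)^{-m_i(s)}$ for part~(c). You instead argue abstractly: the coefficient functional $f\mapsto f_{\ve t}$ vanishes on $I\cap V$ (by the same degree count used to prove Theorem~\ref{nonvanish}), hence descends to $\Fxv/I$, and the CRT isomorphism immediately gives the desired expansion; surjectivity of $\Phi$ handles uniqueness, and the separable test polynomial handles~(c). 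Your approach is shorter and leans more heavily on Theorem~\ref{nullstellensatz} and the Chinese Remainder Theorem already established, whereas the paper's divided-difference machinery is more self-contained and yields an explicit recursive formula (which the authors regard as of independent interest, particularly because it works uniformly in all characteristics without recourse to derivatives). It is a nice touch that your test polynomial in~(c) is exactly the one the paper uses for its proof of~(b), and your computation implicitly recovers the same explicit value of $\alpha^{(\ve s)}_{m(\ve s)-\ve1}$.
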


\bigskip %----------------------------------------------------------

To prove Theorem~\ref{alt3:lem:linkomb}, we apply some well-known
properties of divided differences of univariate polynomials (see \cite{dB}). 
Our considerations include finite fields as well, where these facts must be
handled with special care. In the statement above we allow
multiplicities beyond the field characteristics, and many difficulties
arise when one works with derivatives of order higher than the characteristics. 
Thus, for the sake of completeness, we re-build some of the classical facts 
on divided differences, but without any recourse to derivatives.

We will use also the uniqueness of the polynomial $r$ in the second
statement of Theorem~\ref{nullstellensatz}. We will use the notation
$$ h = (f\mod (g_1,\dots,g_n)) $$
for the unique $h\in\Fxv$ such that $f-h\in(g_1,\dots,g_n)$ and
$\deg_i h<d_i$ for every~$i$.

%-------------------------------------------------------------------

\begin{definition}
  \label{alt3:def:dd}
  For  $f\in\Fxv$ we denote by $f[S]$ the coefficient of ${\ve
    x}^{ \ve d(S)-\ve1}=x_1^{d_1-1}\cdots x_n^{d_n-1}$ in the
  polynomial $(f\mod (g_1,\dots,g_n))$.
\end{definition}

%-------------------------------------------------------------------

\begin{lemma}
  \label{alt3:lem:ddprop} ~
  Let $f\in\Fxv$ be a polynomial over $\F$.
  
\noindent
  (a) If every $S_i$ consists of a single element $a_i$ with
  multiplicity $t_i+1$, then $f[S]=f_{\ve t}((\stb an))$.

  \noindent
  (b) Suppose that some $S_i$ contains at least two different
  elements, say $a$ and $b$. Let $S_i'=S_i\setminus\{a\}$ and
  $S_i''=S_i\setminus\{b\}$ (these multisets contain $a$ and $b$ with
  multiplicity one less than $S_i$), and $S'=S_1\times\cdots\times
  S_{i-1}\times S_i'\times S_{i+1}\times\cdots\times S_n$ and
  $S''=S_1\times\cdots\times S_{i-1}\times S_i''\times
  S_{i+1}\times\cdots\times S_n$. Then
  $$ f[S] = \frac{f[S']-f[S'']}{b-a}. $$
\end{lemma}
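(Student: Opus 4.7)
For part~(a), the plan is to use the Taylor expansion of $f$ at $\ve a = (a_1,\ldots,a_n)$. Since every $g_i(x_i) = (x_i - a_i)^{t_i+1}$, the ideal $(g_1,\ldots,g_n)$ contains every polynomial $(\ve x - \ve a)^{\ve u}$ with some $u_i > t_i$. The truncation $\sum_{\ve u \le \ve t} f_{\ve u}(\ve a)(\ve x - \ve a)^{\ve u}$ therefore represents $(f \mod (g_1,\ldots,g_n))$ by the uniqueness in Theorem~\ref{nullstellensatz}, and among the surviving summands only the top one $(\ve x - \ve a)^{\ve t}$ contributes to the monomial $x_1^{t_1}\cdots x_n^{t_n}$, with coefficient~$1$. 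This gives $f[S] = f_{\ve t}(\ve a)$.

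For part~(b), I would work from the algebraic identity $(x_i - a)\, g_i'(x_i) = (x_i - b)\, g_i''(x_i) = g_i(x_i)$, where $g_i'$ and $g_i''$ denote the analogues of $g_i$ built from $S_i'$ and $S_i''$. Write $f = q' g_i' + R'$ and $f = q'' g_i'' + R''$ with $R' = (f \mod (g_1,\ldots,g_i',\ldots,g_n))$ and $R''$ defined similarly. Multiply the two decompositions by $(x_i-a)$ and $(x_i-b)$ respectively and subtract: the $f$-terms on the left collapse into $(b-a)f$, and the right-hand side becomes a multiple of $g_i$ plus $(x_i-a)R' - (x_i-b)R''$. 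Because $\deg_{x_i} R' < d_i - 1$, the polynomial $(x_i-a)R'$ stays within the reduced range $\deg_{x_j} < d_j$ for all $j$, and similarly for $(x_i-b)R''$; the uniqueness of the remainder from Theorem~\ref{nullstellensatz} then forces
$$ (b-a)\,\bigl(f \mod (g_1,\ldots,g_n)\bigr) = (x_i - a) R' - (x_i - b) R''. $$

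The final step is to read off the coefficient of $x_1^{d_1-1} \cdots x_n^{d_n-1}$ from both sides of this identity. Since $\deg_{x_i} R' < d_i - 1$, the coefficient of $x_i^{d_i-1}$ in $R'$ vanishes, so $(x_i - a) R'$ contributes exactly the coefficient of $x_1^{d_1-1}\cdots x_i^{d_i-2}\cdots x_n^{d_n-1}$ in $R'$, which is precisely $f[S']$ by definition; the analogous calculation gives $f[S'']$ for the other term. Dividing by $b - a$ yields the divided-difference recursion.

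The only genuine obstacle is the bookkeeping of degrees, ensuring that the candidate remainder on the right stays strictly below $d_j$ in each $x_j$ after multiplication by $(x_i - a)$ or $(x_i - b)$, so that the uniqueness in Theorem~\ref{nullstellensatz} may be applied. Everything else is a short algebraic manipulation that mirrors the textbook Newton recursion for divided differences.
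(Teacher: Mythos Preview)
Your proof is correct and follows essentially the same route as the paper's, which likewise multiplies the two reduced forms by $(x_i-a)$ and $(x_i-b)$, subtracts, and invokes the uniqueness of the remainder from Theorem~\ref{nullstellensatz}. One small slip to tidy up: writing $f = q' g_i' + R'$ is not quite right, since $R' = (f \bmod (g_1,\ldots,g_i',\ldots,g_n))$ only gives $f - R' \in (g_1,\ldots,g_i',\ldots,g_n)$; after multiplying and subtracting, the difference therefore lies in the full ideal $(g_1,\ldots,g_n)$ rather than being merely a multiple of $g_i$, but this is exactly what you need to apply the uniqueness clause, so the argument is unaffected.
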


\bigskip % - - - - - - - - - - - - - - - - - - - - - - - - - - - - -

\begin{proof}
  To prove part (a), observe that
  $$
  \Big( f(\ve x) \mod ((x_1-a_1)^{t_1+1},\dots,(x_n-a_n)^{t_n+1}) \Big) =
  \sum_{\ve u\le\ve t}f_u(\ve a) (\ve x-\ve a)^{\ve u}.
  $$
  Then the coefficient of $\ve x^{\ve t}$ is $f[S]$ on the left-hand
  side, and it is $f_{\ve t}(\ve a)$ on the right-hand side.

  As for part (b), from the definition we see that 
  \begin{gather*}
    (x_i-a)\left(f(\ve x) \mod (g_1(x_1),\dots,g_{i-1}(x_{i-1}),
    \frac{g_i(x_i)}{x_i-a}, g_{i+1}(x_{i+1}),\dots,g_n(x_n) )\right) - \\ -
    (x_i-b)\left(f(\ve x) \mod (g_1(x_1),\dots,g_{i-1}(x_{i-1}),
    \frac{g_i(x_i)}{x_i-b}, g_{i+1}(x_{i+1}),\dots,g_n(x_n)) \right) = \\
    \Big( (x_i-a)f(\ve x) \mod (g_1(x_1),\dots,g_n(x_n)) \Big) -
    \Big( (x_i-b)f(\ve x) \mod (g_1(x_1),\dots,g_n(x_n)) \Big) = \\
    \Big( (b-a)f(\ve x) \mod (g_1(x_1),\dots,g_n(x_n)) \Big).
  \end{gather*}
  Comparing the coefficients of ${\ve x}^{\ve t}$, we obtain
  $$
  f\big[S'\big] - f\big[S''\big] = (b-a) f[S].
  $$
\end{proof}

\bigskip %----------------------------------------------------------

\begin{proof}[Proof of Theorem~\ref{alt3:lem:linkomb}]
  (a) By Definition~\ref{alt3:def:dd}, we have
  $$ f_{\ve t}=f[S]. $$
  Apply Lemma~\ref{alt3:lem:ddprop}(b) to the right-hand side
  repeatedly as long as possible. At the end, we arrive at a linear
  combination of some terms of the form $f[M]$ where
  $M=M_1\times\dots\times M_n\subset S$ such that each $M_i$ consist of a
  single element $s_i$ with some multiplicity $u_i+1\leq m_i(s_i)$. By
  Lemma~\ref{alt3:lem:ddprop}(a), we have $f[M]=f_{\ve u}(\ve s)$.

  (b) Suppose that there exist two different systems of constants,
  $(\alpha_{\ve u}^{(s)})$ and $(\alpha'{}_{\ve u}^{(s)})$ which
  have the properties described in part (a). Taking the
  differences, $\delta_{\ve u}^{(s)}=\alpha_{\ve
    u}^{(s)}-\alpha'{}_{\ve u}^{(s)}$ we have
  \begin{equation}
    \label{eq:delta}
    \sum_{\ve s \in S} \sum_{\ve u < m(\ve s)} \delta^{(\ve
      s)}_{\ve u} f_{\ve u} (\ve s) = 0
  \end{equation}
  for all polynomials $f\in\Fxv$, with  $\deg f\le t_1+\dots+t_n$.

  Since the systems $(\alpha_{\ve u}^{(s)})$ and $(\alpha'{}_{\ve
    u}^{(s)})$ are different, there exists some $\delta_{\ve u}^{(\ve
    s)}$ which is not $0$. Take such a $\delta_{\ve u}^{(\ve s)}$
  where the vector $\ve u$ is maximal. Apply
  \eqref{eq:delta} to the polynomial
  $$
  f(\ve x) = \prod_{i=1}^n \left( (x_i-s_i)^{u_i} \prod_{r\in
      S_i\setminus\{s_i\}} (x_i-r)^{m_i( r)} \right).
  $$
  Then, on the left-hand side of \eqref{eq:delta}, since  
$f_{\ve u'}(\ve s)=0$ unless $\ve u'\geq \ve u$, we see that
$\delta_{\ve
    u}^{(\ve s)} f_{\ve u}(\ve s)$ is the only nonzero
  term, giving a  contradiction.

  (c) Fix $\ve s$ and $\ve u=m(\ve s)-\ve1$. Again, apply
  Lemma~\ref{alt3:lem:ddprop}(b) repeatedly to compute 
   $f[S]$. Whenever we have some
  different $s_i$ and $b$ in $S_i$, apply
  Lemma~\ref{alt3:lem:ddprop}(b) to that pair. This way the term
  $f_{\ve u}(\ve s)$ is obtained only once, and with a nonzero
  coefficient. In fact, we obtain that 
$$ \alpha _{\ve u}^{(\ve s)}=\prod _{i=1}^n\prod_{s\in S_i\setminus \{s_i\}}
\frac{1}{(s-s_i)^{m(s)}}. $$
\end{proof}

\bigskip %----------------------------------------------------------

\begin{proof}[Alternative proof of Theorem~\ref{nonvanish}]
  If $d(S_i)>t_i+1$ for some $i$, then we can remove an element from 
$S_i$ (or decrease its multiplicity). So we can assume that 
$d(S_i)=t_i+1$ for every $i$. 

  Apply Theorem~\ref{alt3:lem:linkomb}. On the left-hand side of
  \eqref{alt3:eq:linkomb}, the coefficient $f_{\ve t}$ is not zero.
  Hence, at least one of the values $f_{\ve u}(\ve s)$ is 
different from zero.
\end{proof}

%-------------------------------------------------------------------

\section{Applications}

Some of the known applications of Alon's  nonvanishing theorem 
can be extended to multisets. Typically we found that the original argument
can be modified to allow higher multiplicities. 

%-------------------------------------------------------------------

\subsection{Covering cubes}

We can extend a result of Alon and F\"uredi \cite{AF} on the  covering of a
discrete cube by hyperplanes in the following way. 

\begin{theorem}
  Let $(S_1, m_1), \ldots ,(S_n,m_n)$ be finite multisets from the
  field $\F$. Suppose that $0\in S_i$, with $m_i(0)=1$ for every $i$,
  and $H_1,\dots,H_k$ are hyperplanes in
  $\F^n$ such that every point $\ve s\in S\setminus\{\ve0\}$ is covered 
by at least $|m(\ve s)|-n+1$ hyperplanes and 
the point $\ve0$ is not covered by any of the hyperplanes.
Then $k\geq d(S_1)+d(S_2)+\cdots + d(S_n)-n$.
\end{theorem}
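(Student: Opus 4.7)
The plan is to adapt the classical argument of Alon and F\"uredi to the multiset setting, using Theorem~\ref{ballserra} as the principal tool. For each hyperplane $H_j$ fix an affine linear polynomial $L_j(\ve x)$ whose zero set is $H_j$; since $\ve 0 \notin H_j$, we have $L_j(\ve 0) \neq 0$. Form
$$ f(\ve x) = \prod_{j=1}^k L_j(\ve x), $$
which has degree $k$ and satisfies $f(\ve 0) = \prod_j L_j(\ve 0) \neq 0$, so $f \notin I(\ve 0, m(\ve 0))$ (recall $m(\ve 0) = \ve 1$).

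The heart of the argument is to show that $f \in I(\ve s, m(\ve s))$ for every $\ve s \in S \setminus \{\ve 0\}$. Given such $\ve s$, there is a subset $J \subseteq \{1, \ldots, k\}$ with $|J| \geq |m(\ve s)| - n + 1$ and $L_j(\ve s) = 0$ for all $j \in J$. Each such $L_j$, re-expanded about $\ve s$, is a homogeneous linear polynomial in $(\ve x - \ve s)$; hence $\prod_{j \in J} L_j$ has a Taylor expansion around $\ve s$ in which every monomial $(\ve x - \ve s)^{\ve u}$ has $|\ve u| \geq |J|$, and multiplying by the remaining (arbitrary) linear factors preserves this lower bound on the total degree. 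On the other hand, any exponent vector $\ve u$ with $\ve u < m(\ve s)$ satisfies $|\ve u| \leq \sum_i (m_i(s_i) - 1) = |m(\ve s)| - n < |J|$. Therefore $f_{\ve u}(\ve s) = 0$ for every $\ve u < m(\ve s)$, which is precisely the condition $f \in I(\ve s, m(\ve s))$.

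With this in hand, take the tight multisubsets $D_i = \{0\} \subseteq S_i$ (tight because $m_i(0) = 1$), so $D = \{\ve 0\}$. The polynomial $f$ lies in $I(\ve s, m(\ve s))$ for every $\ve s \in S$ except at the single point $\ve 0 \in D$. Theorem~\ref{ballserra} then yields $\deg f \geq \sum_{i=1}^n \big(d(S_i) - d(D_i)\big) = \sum_i d_i - n$, i.e.\ $k \geq \sum_i d_i - n$, as required. The main technical obstacle is the observation in the previous paragraph: the hypothesis on the number of hyperplanes through $\ve s$ is calibrated precisely so that the order-of-vanishing of $f$ at $\ve s$ kills all Taylor coefficients $f_{\ve u}(\ve s)$ with $\ve u < m(\ve s)$; once this translation is made, the punctured Nullstellensatz closes the argument immediately.
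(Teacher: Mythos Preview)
Your proof is correct and coincides with the paper's third proof, which applies Theorem~\ref{ballserra} directly with $D_i=\{0\}$ and $f(\ve x)=\prod_j L_j(\ve x)$. You spell out in more detail why $f\in I(\ve s, m(\ve s))$ for nonzero $\ve s$ (the paper relegates this to a parenthetical remark in its first proof), but the argument is the same pigeonhole observation: any $\ve u<m(\ve s)$ has $|\ve u|\le |m(\ve s)|-n$, which is strictly less than the order of vanishing of $f$ at $\ve s$.
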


We give three proofs.  The first of them is essentially the original proof 
of Alon and F\"uredi (see \cite{AF}, \cite{Alon1}), adapted to the multiple 
point setting. The second proof uses Theorem~\ref{alt3:lem:linkomb}
directly. The last one is a quite straightforward application of the 
generalized Ball-Serra theorem.

\begin{proof}[First proof] Let  $\ell_j(\ve x)$ be
the
linear polynomial defining the hyperplane
  $H_j$, set  $f(\ve x)=\prod\limits_{j=1}^k\ell_j(\ve x)$, and
$t_i=d(S_i)-1$.

  Let
  $$
  P(\ve x) = \prod_{i=1}^n\prod_{s\in S_i\setminus \{0\}}(x_i-s)^{m_i(s)}
  $$ 
  and
  $$
  F(\ve x) = P(\ve x) - \frac{P(\ve0)}{f(\ve0)}f(\ve x).
  $$
  Note that we have $f(\ve0)\ne0$, because the
  hyperplanes do not cover $\ve0$.
  If the statement is false, then the degree of $F$ is $t_1+t_2+\cdots
  + t_n$ and the coefficient of $x_1^{t_1}\cdots x_n^{t_n}$ is
  $1$. Theorem~\ref{nonvanish} applies for $(S_1, m_1), \ldots
  ,(S_n,m_n)$ and $t_1,\ldots ,t_n$: there exists a vector $\ve s \in  
  S$ such that $F \not \in I(\ve s, m(\ve s))$. We observe that $\ve   
  s$ can not be $\ve 0$, because $F(\ve 0)=0$. Thus $\ve s$ must have at
  least one nonzero coordinate, implying that
  $$
    P(\ve x)  \in  I(\ve s, m(\ve s)).
  $$
Moreover, as $\ve s$ is a nonzero vector,  $f(\ve x)$ must vanish
  at $\ve s$ at least $|m(\ve s)|-n+1$ times, implying that 
  $f(\ve x) \in I(\ve s, m(\ve s))$ (expand the product at $\ve s$; for
  every term $(\ve x -\ve s )^{\ve u} $ obtained there will be an
  index $j$ such that $u_j\geq m_j(s_j)$). From $P(\ve x), f(\ve x) \in  
  I(\ve s, m(\ve s))$ we infer that  $F(\ve x)\in  I(\ve s, m(\ve s))$.
  This contradiction finishes the proof.
\end{proof}

\bigskip % - - - - - - - - - - - - - - - - - - - - - - - - - - - - -

\begin{remark} The polynomial
  $\prod_{i=1}^n\prod_{s\in S_i\setminus \{0\}}(x_i-s)^{m_i(s)}$
  used in the preceding argument shows that the bound of the theorem
  is sharp for any selection of $(S_i,m_i)$. It gives
  $d(S_1)+d(S_2)+\cdots + d(S_n)-n$ hyperplanes with the required
  covering multiplicities.
\end{remark}

\begin{proof}[Second proof]
   We keep the notation $t_i=d(S_i)-1$. We have 
$$d(S_1)+d(S_2)+\cdots + d(S_n)-n = t_1+\dots+t_n.$$
  As in the first proof, let $\ell_j(\ve x)$ be the linear
  polynomial defining the hyperplane $H_j$, and  $f(\ve
  x)=\prod\limits_{j=1}^k\ell_j(\ve x)$. Our goal is to prove $\deg
  f\ge t_1+\dots+t_n$.

  Suppose that $k=\deg f< t_1+\dots+t_n$. By
  Theorem~\ref{alt3:lem:linkomb} we have
  $$
  f_{\ve t} = \sum_{\ve s \in S} \sum_{\ve u < m(\ve s)}
  \alpha^{(\ve s)}_{\ve u} f_{\ve u} (\ve s).
  $$
  On the right-hand side, we have $f_{\ve u} (\ve s)=0$ for all $\ve
  s\in S\setminus\{\ve0\}$ and $\ve u<m(\ve s)$.

  Since the point $\ve0$ is not covered, we have
  $f(\ve0)=f_{\ve0}(\ve0)\ne0$ and, by
  Theorem~\ref{alt3:lem:linkomb}(c),
  $\alpha^{(\ve0)}_{\ve0}\ne0$. Therefore,
  $$
  f_{\ve t} = \alpha^{(\ve0)}_{\ve0} \cdot f_{\ve0}(\ve0) \neq 0.
  $$
  But $f_{\ve t}\neq0$ is possible only if $\deg f\ge
  t_1+\dots+t_n$.
\end{proof}

\medskip % - - - - - - - - - - - - - - - - - - - - - - - - - - - - -

\begin{proof}[Third proof]
We can apply Theorem~\ref{ballserra} directly with $D_i=\{0\}$, $m_i(0)=1$,
$\ve s^*=\ve 0$, and $f(\ve x)=\prod\limits_{j=1}^k\ell_j(\ve x)$.
\end{proof}

\medskip

%-------------------------------------------------------------------

\subsection{The Cauchy-Davenport theorem}

Let $(A, m_1)$ and $(B,m_2)$ be finite multisets in an 
(additively written) Abelian
group $G$. We define
$$ 
m_3(c) = \max \big\{ m_1(a)+m_2(b)-1: a\in A, b\in B, a+b=c \big\}
$$ 
the \emph{multiplicity} of an element $c\in A+B$. 
This way $(A+B,m_3)$ becomes a
multiset.

\begin{theorem}
  \label{thm:CD1}
  Let  $(A, m_1)$ and $(B,m_2)$ are multisets from the finite
  prime field $\F_p$. Then we have 
  $$d(A+B)\geq \min \{p, d(A)+d(B)-1 \}. $$
\end{theorem}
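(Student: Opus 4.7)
The plan is to mimic Alon's classical polynomial-method proof of Cauchy--Davenport, adapted to multiplicities, and derive a contradiction from Theorem~\ref{nonvanish}. Set $d_1=d(A)$, $d_2=d(B)$. First I would reduce to the case $d_1 + d_2 - 1 \le p$: if $d_1 + d_2 - 1 > p$, shrink $m_1, m_2$ to $m_1' \le m_1$, $m_2' \le m_2$ of sizes $d_1', d_2'$ with $d_1' + d_2' = p + 1$. The new sumset multiplicity $m_3'$ then satisfies $m_3' \le m_3$ pointwise (since $m_3(c)$ is a maximum over the original, larger multiplicities), so $d(A+B, m_3) \ge d(A+B, m_3')$, and a bound of $p$ for the shrunken multisets yields the required bound for the originals.

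Henceforth assume $d_1 + d_2 - 1 \le p$, and for contradiction that $d(A+B) \le d_1 + d_2 - 2$. Extend $(A+B, m_3)$ to a multiset $(C, m_C)$ with $m_C \ge m_3$ pointwise and $d(C) = d_1 + d_2 - 2$, and put
$$ f(x, y) = \prod_{c \in C} (x + y - c)^{m_C(c)}. $$
Then $\deg f = d_1 + d_2 - 2$, the top-degree part of $f$ is $(x+y)^{d_1+d_2-2}$, and the coefficient of $x^{d_1-1}y^{d_2-1}$ in $f$ is $\binom{d_1+d_2-2}{d_1-1}$, nonzero in $\Fp$ since $d_1 + d_2 - 2 < p$.

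The key step is to verify that $f$ lies in the ideal $I$ attached to $(A, m_1), (B, m_2)$. Fix $(a, b) \in A \times B$ and let $c_0 = a + b$. By the definition of $m_3$, $m_C(c_0) \ge m_3(c_0) \ge m_1(a) + m_2(b) - 1$. Rewriting $(x + y - c_0)^{m_C(c_0)} = \bigl((x - a) + (y - b)\bigr)^{m_C(c_0)}$ and expanding by the binomial theorem, every term $(x - a)^i (y - b)^j$ has $i + j = m_C(c_0) \ge m_1(a) + m_2(b) - 1$, so either $i \ge m_1(a)$ or $j \ge m_2(b)$; such a term lies in $I\bigl((a,b),(m_1(a), m_2(b))\bigr)$ (the ideal generated by $(x-a)^{m_1(a)}$ and $(y-b)^{m_2(b)}$). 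Since $(x + y - c_0)^{m_C(c_0)}$ divides $f$, we get $f \in I\bigl((a, b), (m_1(a), m_2(b))\bigr)$ for every $(a, b)$, whence $f \in I$.

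Applying Theorem~\ref{nonvanish} with $t_1 = d_1 - 1$ and $t_2 = d_2 - 1$ then yields $f \notin I$, a contradiction. The main obstacle is the membership $f \in I$; it hinges on the algebraic match $c_0 = a + b$ between the linear factor $x + y - c_0$ and the local coordinates at $(a, b)$, combined with the pigeonhole consequence of $m_C(c_0) \ge m_1(a) + m_2(b) - 1$.
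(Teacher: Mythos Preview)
Your proof is correct and follows essentially the same route as the paper: the same polynomial $f(x,y)=\prod_{c\in C}(x+y-c)^{m_C(c)}$, the same binomial coefficient, and the same pigeonhole argument that $(x+y-c_0)^{m_C(c_0)}=\bigl((x-a)+(y-b)\bigr)^{m_C(c_0)}$ forces $f\in I\bigl((a,b),(m_1(a),m_2(b))\bigr)$. The only cosmetic differences are that you establish $f\in I$ first and then invoke Theorem~\ref{nonvanish} for the contradiction (the paper applies the theorem first and derives the contradiction pointwise), and that you spell out the reduction to $d_1+d_2-1\le p$ explicitly, which the paper leaves implicit.
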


\begin{proof}
  We shall use essentially the same polynomial as given in
  \cite{Alon1}. Suppose for contradiction that there exists a multiset
  $C=(C,m)$ such that $A+B\subseteq C$, $p>d(C)$, and $d(C)=d(A)-1+d(B)-1$. 
  We define 
  $$ f(x,y)=\prod_{c\in C}(x+y-c).  $$
  Here we take the factor $(x+y-c)$ precisely $m(c)$ times. We have 
  $f(x,y)\in \F_p[x,y]$ and the coefficient of $x^{d(A)-1}y^{d(B)-1}$ is
  the binomial coefficient $\binom{d(A)-1+d(B)-1}{d(A)-2}$, which is 
  nonzero in $\F_p$. We can apply Theorem~\ref{nonvanish} with $t_1=d(A)-1$, 
  $t_2=d(B)-1$, $(S_1,m_1)=(A,m_1)$ and $(S_2,m_2)=(B,m_2)$. 

  There exist $a\in A$, $b\in B$ and natural numbers $k<m_1(a)$, $l<m_2(b)$ 
  such that in the expansion of $f(x,y)$ at $(a,b)$ the coefficient of 
  $(x-a)^k(y-b)^l$ is nonzero. With the choice $c^*=a+b$ we have 
  $$ f(x,y)=f^*(x,y)(x+y-c^*)^r $$
  where $f^*\in \F_p[x,y]$ and $r\geq m_1(a)+m_2(b)-1$. From 
  $$ (x+y-c^*)^r=\sum_{i=0}^r \binom{r}{i}(x-a)^i(y-b)^{r-i} $$
  we see that $f(x,y)$ vanishes at least $r>k+l$ times at $(a,b)$, a
  contradiction proving the claim.
\end{proof}

\medskip

\begin{remark} The Cauchy Davenport theorem can be proved without 
the polynomial method. Our generalization can also be verified by combining 
the original Cauchy Davenport inequality with  an
elementary argument. In fact, it is possible to prove a bit more.
For a multiset $(Y,m)$ from a group we set 
$$ \deg (Y,m):=\sum_{y\in Y}(m(y)-1).$$ 
We can prove now that
\begin{equation} \label{deg_eq}
  \deg(A+B,m_3) \ge \deg(A,m_1) + \deg(B,m_2).
\end{equation}
If $p\geq |A|+|B|-1$, then we can add to (\ref{deg_eq}) the Cauchy-Davenport 
inquality 
$$ |A+B|\geq |A|+|B|-1 $$
which gives the inequality of Theorem \ref{thm:CD1} under a slightly milder 
condition on $p$.

To prove (\ref{deg_eq}), we may assume 
without loss of generality  that $|A|\le |B|$. Let
  $a_0\in A$ be an element for which $m_1(a_0)$ is maximal. Then
  \begin{gather*}
    \deg(A+B,m_3) \ge \deg\big(a_0+B,m_3\big) =
    \sum_{b\in B} \big(m_3(a_0+b)-1\big) \ge \\
    \ge \sum_{b\in B} \big(m_1(a_0)+m_2(b)-2\big) =
    |B|\cdot \big(m_1(a_0)-1\big) + \sum_{b\in B} \big(m_2(b)-1\big) \ge \\
    \ge |A|\cdot \big(m_1(a_0)-1\big) + \deg(B,m_2) \ge
    \deg(A,m_1) + \deg(B,m_2).
  \end{gather*}

  This multiplicity argument can be extended to non Abelian groups as well. 
From that one 
can  obtain an extension of Theorem~\ref{thm:CD1} by using the generalized
Cauchy Davenport theorem of K\'arolyi \cite{KarolyiCD}. 
\end{remark}

%-------------------------------------------------------------------

\subsection{Sun's theorem on value sets of polynomials}

In \cite{Sun} Z-W. Sun obtained a common generalization of the Cauchy
Davenport theorem, and the theorem of Felszeghy \cite{Felszeghy} on the
solvability of diagonal equations over finite fields. Here we  give a version 
of Sun's result which involves multiplicities. As before, the original result 
is the special case when every  multiplicity is 1. 

Consider again some nonempty finite multisets  $(S_1, m_1), 
(S_2,m_2), \ldots ,(S_n, m_n)$  
from  a field $\F$,  write $S=S_1\times S_2\times \cdots \times S_n$, 
and let $f(\ve x )\in \Fxv $ be a polynomial. The value
set 
$$ f(S_1, S_2, \ldots , S_n):=\{f(s_1,\ldots ,s_n);~s_1\in S_1,\ldots,
s_n\in S_n \}$$
can be considered as a multiset in $\F$. For a 
$c\in  f(S_1, S_2, \ldots ,S_n)$ we set 
$$ m(c):=\max \{m_1(s_1)+\cdots +m_n(s_n)-n+1;~ \ve s\in S,~ f(\ve s)=c
\}.$$
Let $p(\F)$ denote the characteristic of $\F$ if it is positive, and
set $p(\F)=\infty $ otherwise. 

\begin{theorem} Let $f(\ve x )\in \Fxv $ be a polynomial of the 
form 
$$ f(\ve x ) =a_1x_1^k+a_2x_2^k+\cdots +a_nx_n^k+g(\ve x),$$
where $k$ is a positive integer, $a_1,\ldots ,a_n$ are nonzero 
elements of $\F$, and $g\in \Fxv $ with $\deg g <k$. Also, 
let $(S_1, m_1), (S_2,m_2), \ldots ,(S_n, m_n)$  be nonempty finite
multisets from  $\F$. Then we have 
$$  d(f(S_1, S_2, \ldots , S_n))\geq \min \left\{ p(\F),~\sum_{i=1}^n 
\left\lfloor  \frac{d(S_i)-1}{k}\right\rfloor+1 \right\}. $$
\end{theorem}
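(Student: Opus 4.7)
The plan is to argue by contradiction using Theorem~\ref{nonvanish}, mirroring the structure of the Cauchy--Davenport proof above but with a multinomial coefficient in place of the binomial one. Assume for contradiction that $d(f(S_1,\ldots,S_n))$ is strictly smaller than both $p(\F)$ and $\sum_{i=1}^n \lfloor(d(S_i)-1)/k\rfloor+1$. Then we may enlarge the value set to a multiset $(C,m_C)$ containing $f(S_1,\ldots,S_n)$ as a multisubset with $d(C)=\sum_{i=1}^n \lfloor(d(S_i)-1)/k\rfloor$ and $d(C)<p(\F)$. Set $t_i:=k\lfloor(d(S_i)-1)/k\rfloor$, so that $\sum t_i=k\cdot d(C)$ and $t_i<d(S_i)$, and consider
$$ F(\ve x):=\prod_{c\in C}\big(f(\ve x)-c\big)^{m_C(c)}, $$
a polynomial of degree exactly $k\cdot d(C)=\sum t_i$.

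Next I would extract the coefficient of $x_1^{t_1}\cdots x_n^{t_n}$ in $F$. Since this monomial has total degree $\deg F$, only the leading form of $F$ contributes, and that leading form is $\big(a_1x_1^k+\cdots+a_nx_n^k\big)^{d(C)}$. The multinomial theorem then yields the coefficient
$$ \binom{d(C)}{t_1/k,\,t_2/k,\,\ldots,\,t_n/k}\,a_1^{t_1/k}\cdots a_n^{t_n/k}, $$
which is nonzero in $\F$ because the multinomial coefficient divides $d(C)!$, $d(C)<p(\F)$, and each $a_i$ is nonzero. Applying Theorem~\ref{nonvanish} with exponents $t_i<d(S_i)$ produces some $\ve s\in S$ and some $\ve u<m(\ve s)$ with $F_{\ve u}(\ve s)\neq 0$.

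The contradiction comes from showing $F\in I(\ve s,m(\ve s))$ for every $\ve s\in S$. Fix $\ve s$, set $c=f(\ve s)$, and let $r=m_C(c)$. By the definition of the value-set multiplicity and the inclusion $f(S_1,\ldots,S_n)\subseteq(C,m_C)$, we have $r\ge|m(\ve s)|-n+1$, and $(f(\ve x)-c)^r$ divides $F$. Expanding $f(\ve x)-f(\ve s)$ in powers of $(\ve x-\ve s)$ gives only terms with $|\ve w|\ge 1$; raising to the $r$-th power (and multiplying by the remaining factors of $F$) produces an expansion of $F$ at $\ve s$ in which every term $(\ve x-\ve s)^{\ve w}$ satisfies $|\ve w|\ge r$. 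For any $\ve u<m(\ve s)$ one has $|\ve u|\le |m(\ve s)|-n<r$, forcing $F_{\ve u}(\ve s)=0$, contrary to the output of Theorem~\ref{nonvanish}.

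The main obstacle is the coefficient computation in $\F$: the role of $p(\F)$ in the claimed bound is precisely to guarantee that the multinomial coefficient $\binom{d(C)}{t_1/k,\ldots,t_n/k}$ survives reduction modulo $p(\F)$. Once this is in place, the verification $F\in I(\ve s,m(\ve s))$ is a routine order-of-vanishing count, and everything else is an application of the nonvanishing theorem from the multiset setting.
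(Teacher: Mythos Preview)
Your argument is essentially the same as the paper's: form a product of factors $f(\ve x)-c$ of total degree $k\sum_i\lfloor(d(S_i)-1)/k\rfloor$, read off a multinomial coefficient as the coefficient of $x_1^{t_1}\cdots x_n^{t_n}$, apply Theorem~\ref{nonvanish}, and contradict this via an order-of-vanishing count. The paper pads with an extra factor $f^{N-1-d(C)}$ rather than enlarging $C$, and it phrases the pigeonhole step as ``some $(x_i-s_i)^{m_i(s_i)}$ divides each term'' instead of your total-degree inequality $|\ve w|\ge r>|\ve u|$; these are cosmetic.

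There is, however, one genuine gap. You fix $d(C)=\sum_i\lfloor(d(S_i)-1)/k\rfloor$ and simultaneously require $d(C)<p(\F)$, but when $p(\F)\le\sum_i\lfloor(d(S_i)-1)/k\rfloor$ (so that the minimum in the statement is $p(\F)$) these two conditions are incompatible, and your multinomial coefficient need not survive reduction mod $p$. The paper handles exactly this by first shrinking the multisets: replace each $(S_i,m_i)$ by a multisubset so that $k\mid d(S_i)-1$ and $\sum_i(d(S_i)-1)/k=N-1$, where $N$ is the minimum in the statement; then $d(C)=N-1<p(\F)$ automatically. Equivalently, in your formulation you may keep the $S_i$ but choose integers $e_i$ with $0\le e_i\le\lfloor(d(S_i)-1)/k\rfloor$ and $\sum_i e_i=N-1$, set $t_i=ke_i$, and take $d(C)=N-1$. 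With that adjustment your proof goes through and matches the paper's.
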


\begin{proof}
The argument is an adaptation of the one given by Felszeghy and Sun.
As in \cite{Sun}, after possibly replacing some of the $S_i$ by suitable 
multisubsets $S'_i\subseteq S_i$, we can achieve that $k$ divides 
$d(S_i)-1$ for every $i$, and that  
$\sum\limits_{i=1}^n(d(S_i)-1)=k(N-1)$ holds, where 
$$ N=\min \left\{ p(\F),~\sum_{i=1}^n
\left\lfloor  \frac{d(S_i)-1}{k}\right\rfloor +1\right\}. $$

Now put $C:=  f(S_1, S_2, \ldots , S_n)$, and suppose for contradiction, 
that $d(C)\leq N-1$. Consider the polynomial 
$$ h(\ve x)=f(\stb xn )^{N-1-d(C)}\prod_{c\in C}(f(\stb xn )-c).$$
Here on the right hand side the factor $f(\stb xn )-c$ appears 
exactly $m(c)$ times. The degree 
of $h$ is $N-1$, and the coefficient of the monomial $\ve
y=x_1^{d(S_1)-1}\cdots x_n^{d(S_n)-1}$ in $h(\ve x)$ is the same 
as the coefficient of $\ve y$ in 
$$ (a_1x_1^k+a_2x_2^k+\cdots +a_nx_n^k)^{N-1},$$
which is 
$$  \frac{(N-1)!}{\prod_{i=1}^n((d(S_i)-1)/k)!}a_1^{(d(S_1)-1)/k}\cdots 
a_n^{(d(S_n)-1)/k} \not=0 .$$
By Theorem~\ref{nonvanish} there exists an $\ve s\in S$ such that 
$h(\ve x)\not\in I(\ve s, m(\ve s))$. Let $c^*=f(\stb sn)$. Then $c^*$
appears in the multiset $C$ at least $m= m_1(s_1)+\cdots +m_n(s_n)-n+1$
times, giving that the polynomial 
$$ h^*(\ve x)= (f(\stb xn )-c^*)^m$$
divides  $h(\ve x)$ in $\Fxv$. We expand $h^*(\ve x)$ at $\ve s$. 
As  $f(\stb xn )-c^*$ vanishes at $\ve s$, we obtain 
that $h^*(\ve x)=\sum\limits c_j\ve y_j$, where $c_j\in \F$ and the term $\ve y_j$
is a product of at least $m$ linear factors from the set $\{x_1-s_1,\ldots
x_n-s_n\}$. Thus, for each $j$ there exists an $i$ such that
$(x_i-s_i)^{m_i(s_i)}$ divides $\ve y_j$. We infer that $\ve y_j\in 
I(\ve s, m(\ve s)) $, hence   $h^*(\ve x) \in I(\ve s, m(\ve s))$   and
$h(\ve x) \in I(\ve s, m(\ve s))$ as well. This is a contradiction proving 
the claim $d(C)\geq N$.  
\end{proof}

%-------------------------------------------------------------------

\subsection{The Eliahou-Kervaire theorem} 

Eliahou and Kervaire \cite{EK} proved an extension of the
Cauchy-Davenport theorem to arbitrary vector spaces over finite prime
fields $\F_p$.

A triple of integers $(r,s,n)$ satisfies the {\em Hopf-Stiefel condition for
the prime $p$} if $\binom{n}{k}$ is divisible by $p$ for every $k$ in the
range $n-r<k<s$. Let  $\beta_p(r,s)$ be the smallest $n$ for which $(r,s,n)$
satisfies the Hopf-Stiefel condition for $p$. We refer to Eliahou and
Kervaire \cite{EK2} for the  properties of the generalized Hopf-Stiefel 
numbers  $\beta_p(r,s)$.  

We have the following extension of the  Eliahou-Kervaire theorem to
multisets.
The proof follows closely the proof of Theorem 5.1 in \cite{Alon1}.  

\begin{theorem}
Let  $(A, m_1)$ and $(B,m_2)$ be multisets from a (finite) vector space $V$ 
over 
the finite prime field $\F_p$, with $d(A)=r$ and $d(B)=s$. Then we have 
$$d(A+B)\geq \beta_p(r,s). $$
\end{theorem}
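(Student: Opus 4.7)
The plan is to adapt Alon's proof of Theorem 5.1 in \cite{Alon1}---the Eliahou--Kervaire bound for sets---to the multiset setting, following the same pattern used in the proof of Theorem \ref{thm:CD1}: each linear factor of Alon's key polynomial corresponding to an element $c \in A + B$ is raised to the power $m_3(c)$, and the multiset nonvanishing theorem (Theorem \ref{nonvanish}) is invoked in place of its set-theoretic version.

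First I would set up the contradiction. Assume $n := d(A + B) < \beta_p(r, s)$. By the minimality of $\beta_p(r, s)$, the triple $(r, s, n)$ fails the Hopf--Stiefel condition for $p$, so there exists an integer $k$ with $n - r < k < s$ and $\binom{n}{k} \not\equiv 0 \pmod p$. Setting $l = n - k$, we have $l + k = n$, $l < r$, $k < s$, and $\binom{n}{l} = \binom{n}{k}$ is nonzero in $\Fp$.

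Next I would reuse the polynomial $F$ from Alon's proof of the set version of Theorem 5.1. That polynomial lives in suitable scalar variables over $\Fp$; it has total degree $|A + B|$ and its leading coefficient (at a specific monomial of partial degrees $(l, k)$) is a nonzero multiple of $\binom{|A + B|}{l}$. For the multiset version I would replace every linear factor $(\cdots - c)$ with $c \in A + B$ by its $m_3(c)$-th power. The new polynomial has total degree $\sum_c m_3(c) = n$, and the corresponding leading coefficient is now a nonzero multiple of $\binom{n}{l}$. Crucially, because $m_3(c) \geq m_1(a) + m_2(b) - 1$ whenever $a + b = c$, the polynomial vanishes at every point encoding $(a, b) \in A \times B$ to order at least $m_1(a) + m_2(b) - 1$, placing it in the ideal $I(\ve s, m(\ve s))$ at every $\ve s$ of the relevant product set.

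Finally I would apply Theorem \ref{nonvanish} with $t_1 = l, t_2 = k$ (plus zeros for any auxiliary variables) and multisets derived from $(A, m_1), (B, m_2)$. Since $d(A) = r > l$, $d(B) = s > k$, and the leading coefficient of $F$ is nonzero in $\Fp$, the theorem yields a point $\ve s^*$ and a multi-index $\ve u^* < m(\ve s^*)$ with $F_{\ve u^*}(\ve s^*) \neq 0$, contradicting the vanishing property established above. The hard part, inherited from Alon's set-version proof, is arranging the polynomial $F$ so that its leading coefficient really equals a nonzero multiple of $\binom{n}{l}$: the vector-space structure of $V$ over $\Fp$ must be encoded in the polynomial, since a single projection to $\Fp$ would only recover the (weaker) Cauchy--Davenport bound. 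Once Alon's polynomial is available, the multiset adaptation---inserting the exponents $m_3(c)$---is essentially identical to the modification performed in the proof of Theorem \ref{thm:CD1}.
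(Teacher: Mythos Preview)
Your overall strategy matches the paper's: set up a contradiction, use the failure of the Hopf--Stiefel condition at $n=d(A+B)$ to locate an index $k$ with $\binom{n}{k}\not\equiv 0\pmod p$, take the product polynomial with each factor raised to the multiplicity $m_3(c)$, and invoke Theorem~\ref{nonvanish} against the ideal $I(A\times B)$.  The vanishing argument you sketch (order $\ge m_1(a)+m_2(b)-1$ at each $(a,b)$ forcing membership in $I(\ve s,m(\ve s))$) is exactly the one the paper lifts from the proof of Theorem~\ref{thm:CD1}.

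One point deserves sharpening.  You describe Alon's polynomial as living ``in suitable scalar variables over $\Fp$'' and refer to ``auxiliary variables,'' which suggests you may be thinking of expanding $V\cong\Fp^m$ in coordinates.  That route does not fit Theorem~\ref{nonvanish} cleanly, since $A$ and $B$ are not product multisets in $\Fp^m$.  The paper (following Alon) instead identifies $V$ with a finite \emph{field} $\F$ of characteristic $p$; then $A,B$ are multisets of $\F$, the polynomial is simply $f(x,y)=\prod_{c\in C}(x+y-c)^{m(c)}\in\F[x,y]$ in \emph{two} variables, and the coefficient of $x^{n-k}y^{k}$ is literally $\binom{n}{k}$.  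Once you make this identification explicit, no ``auxiliary variables'' are needed and the argument is word-for-word the Cauchy--Davenport proof over the larger field.
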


\begin{proof} We may identify $V$ with a finite field $\F$ of characteristic
$p$, and view $A$ and $B$ as multisets from $\F$. Suppose for contradiction
that $A+B$ is contained in a multiset $C=(C,m)$ such 
that $\beta_p(r,s) >d=d(C)$. As in the proof of Theorem~\ref{thm:CD1}, we 
define 
$$ f(x,y)=\prod_{c\in C}(x+y-c),  $$
where the factor $(x+y-c)$ is taken $m(c)$ times.

From the definition  of $\beta_p(r,s)$ it follows that there exists a $k$ 
with $d-r<k<s$ such that  $\binom{d}{k}$is not divisible by $p$.
This implies, that the coefficient of $x^{d-k}y^k$ in $f$ is nonzero.  
Also, we have $d(A)=r>d-k$ and $d(B)=s>k$. Theorem~\ref{nonvanish}
implies that $f\not \in I(A\times B)$. On the other hand, as in the proof of
Theorem~\ref{thm:CD1}, from the choice of the multiset $C$ we see that 
$f\in I(A\times B)$. This contradiction proves the theorem.
\end{proof}

%-------------------------------------------------------------------

\end{document}